\numberwithin{equation}{section}
\numberwithin{figure}{section}
\renewcommand{\ALG@name}{Iterative Scheme}
\newtheorem{proposition}{Proposition}
\newtheorem{theorem}{Theorem}
\newtheorem{lemma}{Lemma}
\newtheorem{definition}{Definition}
\newtheorem{example}{Example}
\newcommand{\gph}{\text{gph}\,}
\newcommand{\hilbertH}{{\mathcal H}}
\newcommand{\hilbertG}{{\mathcal G}}
\newcommand{\dist}{\text{dist}\,}
\newcommand{\rank}{\text{rank}\,}
\newcommand{\multifC}{{\mathbb C}}
\newcommand{\setD}{{\mathcal D}}
\begin{document}
	
	\title{On Lipschitz-like property for polyhedral moving sets}
	\subjclass[2010]{41A50, 46C05, 49K27, 52A07, 90C31.}

	\keywords{metric regularity, moving polyhedral sets, relaxed constant rank constraint qualification}
		\author{
			Ewa M. Bednarczuk$^1$
		}
	\author{
		Krzysztof E. Rutkowski$^2$ 
	}
	\thanks{$^1$ Systems Research Institute of the Polish Academy of Sciences, Warsaw University of Technology, 		 \href{mailto:e.bednarczuk@mini.pw.edu.pl}{e.bednarczuk@mini.pw.edu.pl} .}
	\thanks{$^2$ Warsaw University of Technology, 		 \href{mailto:k.rutkowski@mini.pw.edu.pl}{k.rutkowski@mini.pw.edu.pl} .}
	\maketitle
	
	\begin{abstract}
We give sufficient conditions for Lipschitz-likeness of a class of polyhedral set-valued mappings in Hilbert spaces based on Relaxed Constant Rank Constraint Qualification (RCRCQ) proposed recently by Minchenko and Stakhovsky. To this aim we prove the $R$-regularity of
the considered  set-valued mapping and correct the respective proof given by these authors. 
	\end{abstract}

\section{Introduction}
Let $\hilbertH, \hilbertG$ be a Hilbert space and $D\subset \hilbertG$ be a nonempty set. 
Let $\multifC:\ \setD\rightrightarrows \hilbertH$ be a multifunction defined as $\multifC(p):= C(p)$, where 
\begin{align}\label{M(p,v)}
	C(p)=\left\{ x\in \hilbertH\ \bigg|\ 
	\begin{array}{ll}
		\langle x \ |\ g_i(p)\rangle = f_i(p), &i \in I_1,\\
		\langle x \ |\ g_i(p)\rangle \leq f_i(p), &i \in I_2
	\end{array}
	\right\},
\end{align}
and $f_i:\ \setD\rightarrow \mathbb{R}$, $g_i:\ \setD\rightarrow\hilbertH$, $i\in I_1\cup I_2,$ $I_1=\{1,\dots,m\}$, $I_2=\{m+1,\dots,n\}$ are Lipschitz on $\setD$ with Lipschitz constants $\ell_{f_i},\ell_{g_i}$, respectively. 

In finite dimensional case ($\hilbertH=\mathbb{R}^{n_1}$, $\hilbertG=\mathbb{R}^{n_2}$) the sufficient conditions for $R$-regularity of multifunction $\multifC$ and more general set-valued mappings have been proposed in \cite[Theorem 4]{parametric_nonlinear_programming_Minchenko}. $R$-regularity of the multifunction $\multifC$ at $ (\bar{p},\bar{x})\in \gph \multifC$ is defined as follows.
\begin{definition}
	Multifunction $\multifC:\ \setD\rightrightarrows \hilbertH$ given by \eqref{M(p,v)}  is said to be $R$-regular  at a point $(\bar{p},\bar{x})$, if for all $(p,x)$ in a neighbourhood of $(\bar{p},\bar{x})$,
	\begin{equation*}
		\dist\left(x,\multifC(p)\right)\leq \alpha \max\{0,\ |\langle x \ |\ g_i(p)\rangle - f_i(p)|,\ i\in I_1 ,\ \langle x \ |\ g_i(p)\rangle - f_i(p),\ i\in I_2 \}
	\end{equation*}
	for some $\alpha>0$.
\end{definition} 
The aim of the paper is to investigate the Lipschitz-like property of the multifunction $\multifC$ at $ (\bar{p},\bar{x})\in \gph \multifC$ defined as follows.
\begin{definition}
	Multifunction $\multifC$ is \textit{Lipschitz-like} at a point $(\bar{p},\bar{x})$, if 
	there exist a constant $\ell>0$, a neighbourhood  $U(\bar{p})$ and a neighbourhood  $V(\bar{x})$ such that for all $p_1,p_2\in U(\bar{p})$
	\begin{equation*}
		\multifC(p_1)\cap V(\bar{x})\subset \multifC(p_2)+\ell \|p_1-p_2\|\mathbb{B},
	\end{equation*}
	where $B$ denotes the open unit ball in the space $\hilbertH$.
\end{definition}

To this aim we provide Proposition \ref{theorem:RCRCQ_to_condition} which is the infinite-dimensional version of Lemma 3 of \cite{parametric_nonlinear_programming_Minchenko} applied to our set-valued mapping  \eqref{M(p,v)}.
However, the proof of  \cite[Lemma 3]{parametric_nonlinear_programming_Minchenko} which is important for the proof of  \cite[Theorem 4]{parametric_nonlinear_programming_Minchenko} is incorrect. It is also our aim to provide the correct proof of \cite[Lemma 3]{parametric_nonlinear_programming_Minchenko} in our case.

\section{Preliminaries}
Let $p\in \setD$, $w\in \hilbertH$, $w\notin C(p)$. Projection of $w$ onto $C(p)$ is defined as 
\begin{equation}\label{eq:projection}
P_{C(p)}(w)=\arg\min\limits_{x\in C(p)} \|w-x\|,
\end{equation}
or equivalently
\begin{equation}\label{eq:projection2}
P_{C(p)}(w)=\arg\min\limits_{x\in C(p)} \frac{1}{2}\|w-x\|^2.
\end{equation}

Put $f_w(x)=\|x-w\|$ and
\begin{equation*}
	f_{P_{C(p)}(w)  }^*(x)=\| x-w \|+\frac{\langle x-w \ |\ x- P_{C(p)}(w)\rangle  }{\| P_{C(p)}(w)-w\| }.
\end{equation*}
Denote $G_i(x,p)=\langle x \ |\ g_i(p)\rangle - f_i(p)$, $i\in I_1\cup I_2$ and $\bar{G}_i(x,p)=G_i(x,p)$ for $g_i(p)=a_i$, $a_i\in \hilbertH$, $i\in I_1\cup I_2$, i.e.,  $g_i$, $i\in I_1\cup I_2$ does not depend on $p$. Let $G(x,p)$ and $\bar{G}(x,p)$ be defined as
\begin{align*}
	G(x,p)=\left[ G_i(x,p)\right]_{i=1,\dots,n},\quad 
	\bar{G}(x,p)=\left[ \bar{G}_i(x,p)\right]_{i=1,\dots,n}.
\end{align*}
Let $\lambda\in \mathbb{R}^n$ and
\begin{align*}
	&L_{w}(p,x,\lambda):=f_w(x)+\langle \lambda \ |\ G(x,p) \rangle,\\
	&L_{w}^*(p,x,\lambda):=f_{P_{C(p)}(w)  }^*(x)+\langle \lambda \ |\ G(x,p) \rangle.
\end{align*}
The  sets of Lagrange multipliers corresponding to \eqref{eq:projection} are defined as
\begin{align*}
	\Lambda_{w}(p,x):=\{ \lambda\in\mathbb{R}^n\ |\ \nabla_x L_{w}(p,x,\lambda)=0,\ \lambda_i\geq 0,\ \text{and}\ \lambda_i G_i(x,p)=0,\ i \in I_2  \},\\
	\Lambda_{w}^*(p,x):=\{ \lambda\in\mathbb{R}^n\ |\ \nabla_x L_{w}^*(p,x,\lambda)=0,\ \lambda_i\geq 0,\ \text{and}\ \lambda_i G_i(x,p)=0,\ i \in  I_2  \}.
\end{align*} 
Then
\begin{align}
	\begin{aligned}\label{set:2lambda}
		&\nabla_x L_{w}(p,P_{C(p)}(w),\lambda)=\frac{P_{C(p)}(w)-w}{\|P_{C(p)}(w)-w\|}+\sum_{i=1}^{n}\lambda_i g_i(p),\\
		&\nabla_x L_{w}^*(p,P_{C(p)}(w),\lambda)=2\frac{P_{C(p)}(w)-w}{\|P_{C(p)}(w)-w\|}+\sum_{i=1}^{n}\lambda_i g_i(p).
	\end{aligned}
\end{align}
Let us note that when $w\notin C(p)$ condition $\nabla_x L_{w}(p,P_{C(p)}(w),\lambda)=0$ is equivalent to the following
\begin{align}\label{equivalence:relations_lambda}
	&\frac{w-P_{C(p)}(w)}{\|P_{C(p)}(w)-w\|}=\sum_{i=1}^{n}\lambda_i g_i(p)\quad  \Leftrightarrow \quad w-P_{C(p)}(w)=\sum_{i=1}^{n}\hat{\lambda}_i  g_i(p),
\end{align}
where $\hat{\lambda}_i=\lambda_i\|P_{C(p)}(w)-w\|$, $i=1,\dots,n$.

Let us recall that the Kuratowski limit of $\multifC$ at $\bar{p}$ is given as
\begin{equation*}
	\liminf_{p\rightarrow\bar{p}} \multifC(p)=\{ y\in \hilbertH\ |\ \forall\, p_k\rightarrow \bar{p}\ \exists\, y_k\in \multifC(p_k)\quad y_k \rightarrow y\}.
\end{equation*}
Equivalently, $\bar{x}\in \liminf\limits_{p\rightarrow\bar{p}} \multifC(p)$ if and only if 
\begin{equation}
\forall\ V(\bar{x})  \ \exists\ U(\bar{p})\ \text{s.t.}\ \multifC(p)\cap V(\bar{x})\neq \emptyset \quad \text{for}\ p\in U(\bar{p}).
\end{equation}
For any $(p,x)\in \setD\times \hilbertH$ let $I_{p}(x):= \{ i\in I_1\cup I_2\ |\ \langle x-f_i(p)\ |\ g_i(p)\rangle =0  \}$ denote the active index set for $p\in \setD$ at $x\in \hilbertH$. 
\begin{definition}[Relaxed Constant Rank Constraint Qualification] 
	The \textit{relaxed constant rank constraint qualification} (RCRCQ)  holds for multifunction $\multifC:\, \setD\rightrightarrows \hilbertH$ given by \eqref{M(p,v)} at $(\bar{p},\bar{x})$, $\bar{x}\in C(\bar{p})$, if there exists a neighbourhood $U(\bar{p})$ of $\bar{p}$ such that, for any index set $J$, $I_1\subset J\subset I_{\bar{p}}(\bar{x})$, for every $p\in U(\bar{p})$ the system of vectors $\{ g_i(p), i\in J  \}$ has constant rank. Precisely, for any $J$, $I_1\subset J\subset I_{\bar{p}}(\bar{x})$
	\begin{equation*}
		\rank(g_i(p),i\in J)=\rank(g_i(\bar{p}),i\in J)\quad \text{for all }p\in U(\bar{p}.)
	\end{equation*} 
\end{definition}
For more general constraint sets this definition has been introduced in \cite[Definition 1]{parametric_nonlinear_programming_Minchenko}. In \cite{Kruger2014}  several kinds of relations between constraint qualifications (for $C(\bar{p})$) has been established including RCRCQ and the classical Mangasarian Fromovitz Constraint Qualification (MFCQ). 

The following diagram provides the summary of the existing results concerning $R$-regularity, calmness, metrical subregularity, metric regularity of sets and multifunctions $\multifC(p)$. Let us note however that it also applies to more general forms of sets and multifunctions.

$$\begin{tikzpicture}[node distance=2cm, auto]\
\node[draw] at (0,0) (RCR-regular) [text width=4cm] {set $C(\bar{p})$ is $RCRCQ$-regular at $\bar{x}\in F(\bar{p})$};
\node[draw] at (0,-2) (R-regular) [text width=4cm] {set $C(\bar{p})$ is $R$-regular at $\bar{x}\in F(\bar{p})$};
\node[draw] at (0,-4) (calm) [text width=4cm] {multifunction $\multifC$ is calm at $(\bar{p},\bar{x})$};
\node[draw] at (0,-6.5) (calm-G) [text width=4cm] {multifunction $\multifC^{-1}$ is metrically subregular at $(\bar{x},\bar{p})$};
\node[draw] at (6,0) (RCR-regular-MF) [text width=5.5cm] {multifunction $\multifC$ is $RCRCQ$-regular at $(\bar{p},\bar{x})\in\gph F$;
};
\node[draw] at (6,-2) (R-regular-MF) [text width=5.5cm] {multifunction $\multifC$ is $R$-regular at $(\bar{p},\bar{x})\in\gph F$};
\node[draw] at (6,-4) (MFCQ)  {MFCQ at $\bar{x}$};
\node[draw] at (6,-6.5) (G-metric) {mutifunction ${\mathbb G}$ is metrically regular at $(0,\bar{x})$};
\draw[->] (R-regular) to node 
{\begin{tabular}{l}Henrion, Outrata\\ \cite[Lemma 1]{on_the_calmness_of_a_class-of_multifunctions}\end{tabular}} (calm);
\draw[<->] (calm) to node 
{	\begin{tabular}{l}
	Dontchev, Rockafellar\\ \cite[Theorem 3H.3]{implicit_functions_and_solution_mapping} \end{tabular}	} (calm-G);
\draw[<->] (G-metric) to node 
[swap] {\begin{tabular}{l}
	\text{Dontchev,} \\
	\text{Quincampoix,}\\
	\text{Zlateva}\\
	\cite[Theorem 4.1]{aubin_criterion_for_metric_regularity}
	\end{tabular}} (MFCQ);
\draw[->] (RCR-regular) to node {\begin{tabular}{l}Minchenko, Stakhovski\\ \cite[Theorem 3]{om_relaxed_constant_rank_regularity_condition}\end{tabular} } (R-regular);
\draw[->, dashed] (RCR-regular-MF) to node {\begin{tabular}{l}Minchenko, Stakhovski\\ \cite[Theorem 4]{parametric_nonlinear_programming_Minchenko}\end{tabular}} (R-regular-MF);
\draw[->] (RCR-regular-MF) to node {} (RCR-regular);
\draw[->] (R-regular-MF) to node {} (R-regular);
\draw[->]  (calm) to [out=170, in=190] node [yshift=0.cm]{$\not$} (R-regular);
\draw[->]  (MFCQ) to node [swap] {\begin{tabular}{l}Bonnans, Shapiro\\ \cite[Theorem 2.87]{Bonnans_Shapiro}\end{tabular}} (R-regular-MF);
\end{tikzpicture}$$
In the diagram multifunction ${\mathbb G}$ is defined as ${\mathbb G}=\bar{G}+K$, where $K=\{0\}^m\times \mathbb{R}_{+}^{n-m}$.
Implication given as dotted line under additional assumption has been proposed in \cite[Theorem 4]{parametric_nonlinear_programming_Minchenko}. However, as mentioned in Introduction the proof of \cite[Theorem 4]{parametric_nonlinear_programming_Minchenko} is incorrect. In the next section we present a counterexample to the proof of \cite[Lemma 3]{parametric_nonlinear_programming_Minchenko} and propose a new proof in our settings.


\section{Main result}

We start with the proposition which relates  RCRCQ condition to the boundedness  (with respect to ${p},w$) of Lagrange multiplier set 
\begin{equation*}
	\Lambda_{w}^M(p,P_{C(p)}(w)):=\{ \lambda \in \Lambda_{w}(p,P_{C(p)}(w))\ |\ \sum_{i=1}^{n} |\lambda_i|\leq M\}  .
\end{equation*}

\begin{proposition}\label{theorem:RCRCQ_to_condition}
	Let multifunction $\multifC$ given by \eqref{M(p,v)} satisfy RCRCQ at $(\bar{x},\bar{p})\in \gph \multifC$. Assume that $\bar{x}\in \liminf\limits_{p\rightarrow \bar{p}} \multifC(p)$. Then there exist numbers $M>0$, $\delta>0$, $\delta_0>0$ such that 
	\begin{equation*}
		\Lambda_{w}^M(p,P_{C(p)}(w))\neq \emptyset\quad  \text{ for $p\in \bar{p}+\delta_0B$, $w\in \bar{x}+\delta B$, $w\notin C(p)$.}
	\end{equation*}	
	
\end{proposition}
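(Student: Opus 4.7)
The plan is to argue by contradiction. If the conclusion fails, then there exist sequences $p_k\to\bar p$ and $w_k\to\bar x$ with $w_k\notin C(p_k)$ such that every $\lambda\in\Lambda_{w_k}(p_k,P_{C(p_k)}(w_k))$ has $\sum_i|\lambda_i|\to\infty$. The hypothesis $\bar x\in\liminf_{p\to\bar p}C(p)$ supplies $y_k\in C(p_k)$ with $y_k\to\bar x$, and then $\|P_{C(p_k)}(w_k)-w_k\|=\dist(w_k,C(p_k))\le\|y_k-w_k\|\to 0$, so $P_{C(p_k)}(w_k)\to\bar x$ and continuity of the $G_i$ forces $I_{p_k}(P_{C(p_k)}(w_k))\subset I_{\bar p}(\bar x)$ for $k$ large. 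Because $C(p_k)$ is the intersection of finitely many half-spaces and hyperplanes, polyhedral KKT applied to the projection problem gives $\Lambda_{w_k}(p_k,P_{C(p_k)}(w_k))\ne\emptyset$ for every $k$.

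For each $k$ I would pick a particular $\lambda^k$ by a two-step normalization. Set $J(\lambda):=I_1\cup\{i\in I_2:\lambda_i>0\}$. First, reduce only the \emph{inequality} support of $\lambda^k$ by a Carath\'eodory-type step: while some dependence $\sum_{i\in J(\lambda^k)}\alpha_i g_i(p_k)=0$ has $\alpha_j\ne 0$ for at least one $j\in I_2\cap J(\lambda^k)$, replace $\lambda^k$ by $\lambda^k+t\alpha$ with $t$ chosen to annihilate one coordinate $\lambda^k_i$ with $i\in I_2\cap J(\lambda^k)$ while keeping $\lambda^k_j+t\alpha_j\ge 0$ for all $j\in I_2$; this terminates after finitely many iterations when $\{g_i(p_k):i\in I_2\cap J(\lambda^k)\}$ is linearly independent modulo $\mathrm{span}\{g_j(p_k):j\in I_1\}$. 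Second, replace the $I_1$-coordinates of $\lambda^k$ by the minimum-norm solution of $\sum_{i\in I_1}\lambda_i g_i(p_k)=\bigl(w_k-P_{C(p_k)}(w_k)\bigr)/\|P_{C(p_k)}(w_k)-w_k\|-\sum_{i\in I_2}\lambda^k_i g_i(p_k)$, which remains a valid element of $\Lambda_{w_k}(p_k,P_{C(p_k)}(w_k))$.

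Because $J(\lambda^k)\subset I_1\cup I_{\bar p}(\bar x)$ and only finitely many index sets arise, I pass to a subsequence with $J(\lambda^k)=J$ constant. RCRCQ applied separately to $I_1$ and to $J$ (both containing $I_1$ and contained in $I_{\bar p}(\bar x)$) makes the ranks of $\{g_i(p):i\in I_1\}$ and of $\{g_i(p):i\in J\}$ constant in a neighbourhood of $\bar p$. Consequently the smallest positive singular values of $[g_i(p)]_{i\in I_1}$ and of the image of $\{g_i(p):i\in I_2\cap J\}$ in the finite-dimensional quotient $\hilbertH/\mathrm{span}\{g_j(p):j\in I_1\}$ depend continuously on $p$ and are bounded below by positive constants near $\bar p$. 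The $I_2$-part of $\lambda^k$ is then bounded by one divided by this quotient singular value, and subsequently the min-norm $I_1$-part is bounded by a linear function of $\|\lambda^k_{I_2}\|_1+1$ divided by the $I_1$ singular value. This contradicts $\|\lambda^k\|_1\to\infty$ and completes the proof.

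The main obstacle is the mismatch between the Carath\'eodory-type reduction and the RCRCQ hypothesis: the naive reduction zeroing out arbitrary coordinates of $\lambda^k$ may destroy $I_1\subset J(\lambda^k)$, which blocks direct use of the constant-rank statement. Restricting the reduction to $I_2$ and passing to the finite-dimensional quotient by $\mathrm{span}\{g_j(p):j\in I_1\}$, together with the min-norm choice of $\lambda^k_{I_1}$, places the linear algebra exactly in the setting where RCRCQ guarantees constant rank, and the resulting continuity of the smallest positive singular value under constant rank then transports the bound uniformly to a neighbourhood of $\bar p$.
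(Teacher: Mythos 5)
Your argument is correct, and its skeleton matches the paper's proof: contradiction via sequences $p_k\rightarrow\bar p$, $w_k\rightarrow\bar x$ with $w_k\notin C(p_k)$; the KKT representation of $w_k-P_{C(p_k)}(w_k)$; a Carath\'eodory-type reduction of the inequality multipliers; passage to a subsequence on which the surviving index set is fixed; and RCRCQ to transport rank information to $\bar p$. The two proofs diverge in how they treat the equality block and in the finishing step. The paper first replaces $I_1$ by a linearly independent subfamily $I_1^0$ (its Proposition \ref{propostion:I_1-linearly_independent}, which uses RCRCQ on $I_1$ to show that redundant equalities remain redundant near $\bar p$), then reduces the inequality part so that the whole family $\{g_i(p_k),\ i\in I_1^0\cup I_2^0\}$ is linearly independent (Lemmas \ref{lemma:positive_combination2} and \ref{lemma:linearly_independent_equalities}), and finally normalizes, $\lambda^k\|\lambda^k\|^{-1}\rightarrow\bar\lambda$, to produce a nontrivial vanishing combination $0=\sum\bar\lambda_i g_i(\bar p)$ contradicting the linear independence at $\bar p$ inherited via RCRCQ. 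You instead keep all of $I_1$, make the inequality gradients independent only modulo $\mathrm{span}\{g_j(p_k):j\in I_1\}$, take the minimum-norm solution for the $I_1$-coordinates, and extract explicit uniform bounds from the smallest positive Gram eigenvalues, which stay bounded below near $\bar p$ precisely because RCRCQ keeps the ranks of $\{g_i(p):i\in I_1\}$ and $\{g_i(p):i\in J\}$ constant. Your route yields an explicit admissible $M$ and has the merit of invoking RCRCQ only for index sets $J$ with $I_1\subset J\subset I_{\bar p}(\bar x)$, which is exactly what the definition licenses (the paper's final step applies the constant-rank condition to $I_1^0\cup I_2^0$, which need not contain $I_1$); the price is that you must justify the continuity in $p$ of the orthogonal projection onto $\mathrm{span}\{g_j(p):j\in I_1\}$ and of the smallest positive Gram eigenvalue under constant rank --- both standard consequences of the Gram-determinant argument used in the paper's Lemma \ref{lemma:linear_independent}, but each deserving an explicit sentence in a final write-up.
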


The content of Proposition \ref{theorem:RCRCQ_to_condition} coincides with the content of \cite[Lemma 3]{parametric_nonlinear_programming_Minchenko}. The proof of Proposition \ref{theorem:RCRCQ_to_condition} we present below is essentially different from the proof of  Lemma 3 of 
\cite{parametric_nonlinear_programming_Minchenko}. The proof of \cite[Lemma 3]{parametric_nonlinear_programming_Minchenko} is incorrect which can be shown by the the following example.

\begin{example}
	
	Let $\multifC:\ \mathbb{R}^2\rightarrow \mathbb{R}^2$ be defined as follows
	\begin{equation}
	\label{eq_C}
	C(p):=\left\{ x\in \mathbb{R}^2 \bigg| 
	\begin{array}{l}
	\langle x \ |\ (1,0)\rangle =0\\
	\langle x \ |\ (0,1)\rangle =0\\
	\langle x \ |\ p \rangle \leq0
	\end{array}
	\right\}
	\end{equation}
	and $\bar{p}=\bar{x}=(0,0)$. We have $C(p)=\{(0,0)\}$ for all $p=(r_1,r_2)\in \mathbb{R}^2$ and
	\begin{enumerate}
		\item RCRCQ holds for multifunction $\multifC$  at $z_0=((0,0),(0,0))\in \gph(\multifC)$,
		\item $(0,0)\in \liminf\limits_{p\rightarrow (0,0)} \multifC(p)$.
	\end{enumerate}
	
	We have $g_1(p)=(1,0)$, $g_2(p)=(0,1)$, $g_3(p)=p$ for all $p\in \mathbb{R}^2$ and $G_1(p,x)=\langle x \ |\ (1,0)\rangle$, $G_2(p,x)=\langle x \ |\ (0,1)\rangle$, $G_3(p,x)=\langle x \ |\ p \rangle$ and the assumptions of \cite[Lemma 3]{parametric_nonlinear_programming_Minchenko} are satisfied.
	
	The proof of \cite[Lemma 3]{parametric_nonlinear_programming_Minchenko} relies on showing that for any sequences $p_k\rightarrow \bar{p},\ w_k\rightarrow \bar{x}$, $w_k\notin C(p_k)$ there exist 
	$$\lambda_k\in \Lambda_{w_k}^M(p_k,P_{C(p_k)}(w_k))\ \text{for some } M\geq 0  \text{ and all } k\in \mathbb{N}.
	$$ 
	
	Below we show that the way of choosing $\lambda_k$  which are to satisfy the above property is incorrect in general. More precisely, we show that for $\multifC$ defined by \eqref{eq_C}
	there are sequences $p_k\rightarrow \bar{p},w_k\rightarrow \bar{x}$ and $\lambda_k\in \Lambda_{w_k}(p_k,P_{C(p_k)}(w_k))$ chosen as in the proof of \cite[Lemma 3]{parametric_nonlinear_programming_Minchenko}
	with $\|\lambda_k\|\rightarrow +\infty$.
	
	Let $p_k=(\frac{1}{k^2},\frac{1}{k^2})\rightarrow (0,0)$, $w_k=(\frac{1}{k},\frac{2}{k})$. We have that $x_k=(0,0)=\Pi_{C(p_k)}(w_k)$ and in the notation of the proof of  \cite[Lemma 3]{parametric_nonlinear_programming_Minchenko}, $z_k=\left((\frac{1}{k^2},\frac{1}{k^2}),(0,0)\right)$.
	We have $I_{p_k}(x_k)=I^*=\{1,2,3\}$ and
	\begin{align*}
		&0=\frac{P_{C(p_k)(w_k)}-w_k}{\|P_{C(p_k)(w_k)}-w_k\|}+\sum_{i\in I_{p_k}(x_k)}\lambda_i g_i(p_k)\\
		&\Leftrightarrow\quad \left(\frac{1}{\sqrt{5}},\frac{2}{\sqrt{5}}\right)=\lambda_1 (1,0)+\lambda_2 (0,1)+\lambda_3 \left(\frac{1}{k^2},\frac{1}{k^2}\right).
	\end{align*} 	
	There exists a maximal linearly independent subfamily $\{ g_i(p_k),\ i\in \{2,3\}   \}$ in the family  $\{ g_i(p_k),\ i\in \{1,2,3\}   \}$ such that  $(0,\frac{1}{\sqrt{5}},\frac{k^2}{\sqrt{5}})\in \Lambda_{(\frac{1}{k^2} ,\frac{2}{k^2})}((\frac{1}{k^2},\frac{1}{k^2}),(0,0))$ for all $k\in \mathbb{N}$.
	
	In the notation of the proof of  \cite[Lemma 3]{parametric_nonlinear_programming_Minchenko} 
	we have $J(z_k)=J^0=\{2,3\}$. RCRCQ at the point $z_0$ implies that
	\begin{equation*}
		2=\rank \{ g_i(\bar{p}),\ i\in \{1,2,3\} \}=\rank \{ g_i(p),\ i\in \{1,2,3\} \}
	\end{equation*}
	for all points $z\in \mathbb{R}^2$. Moreover for all $z_k$, $k=1,2,\dots$ we have
	\begin{equation*}
		2=\rank \{ g_i(p_k),\ i\in \{1,2,3\} \}=\rank \{ g_i(p),\ i\in \{2,3\} \}.
	\end{equation*}
	Observe  that $\rank\{g_1(p),g_2(p) \}=2$ for all $p\in U((0,0))$. Hence, in the notation of the proof of  \cite[Lemma 3]{parametric_nonlinear_programming_Minchenko},  $J^{00}=\{1,2\}$ and the  function $\Phi$ takes the form
	\begin{equation*}
		\begin{array}{l}
			G_1(p,x)=\Phi(G_1(p,x),G_2(p,x)),\\
			G_2(p,x)=\Phi(G_1(p,x),G_2(p,x)),\\
			G_3(p,x)=\Phi(G_1(p,x),G_2(p,x))=r_1 G_1(p,x)+r_2 G_2(p,x)\\
			\text{(since $\langle x \ |\ p\rangle = \langle x \ |\  \langle p \ |\ (1,0)\rangle  \cdot (1,0)\rangle + \langle x \ |\  \langle p \ |\ (0,1)\rangle\cdot (0,1)\rangle $)}.
		\end{array}
	\end{equation*}
	On the other hand,
	\begin{align*}
		g_3(p_k)&=\nabla_x \Phi(G_1(p_k,x_k),G_2(p_k,x_k))=\frac{1}{k} (1,0)+\frac{1}{k} (0,1),\\
		g_3(\bar{p})&=\nabla_x \Phi(G_1(\bar{p},\bar{x}),G_2(\bar{p},\bar{x}))=0\cdot (1,0)+0\cdot (0,1),
	\end{align*}
	and vectors $g_2((0,0))=(1,0)$, $g_3((0,0))=(0,0)$ are linearly dependent. Moreover, $\|(0,\frac{1}{\sqrt{5}},\frac{k^2}{\sqrt{5}})\|=\sqrt{\frac{1}{5}+\frac{k^4}{5}}\rightarrow +\infty$ and
	\begin{align*}
		(0,0)&=\lim_{k\rightarrow +\infty} \frac{1}{\sqrt{\frac{1}{5}+\frac{k^4}{5}}} (\frac{1}{k},\frac{2}{k})=\lim_{k\rightarrow +\infty}\frac{\sqrt{5}}{k^2\sqrt{\frac{1}{k^4}+1}}(\frac{1}{k},\frac{2}{k})\\
		&=\lim_{k\rightarrow +\infty} \frac{\sqrt{5}}{k^3\sqrt{\frac{1}{k^4}+1}}(0,1)+\frac{\sqrt{5}}{k\sqrt{\frac{1}{k^4}+1}}(\frac{1}{k^2},\frac{1}{k^2})=0(0,1)+0(0,0).
	\end{align*}
	The example shows that the construction proposed in the proof of \cite[Lemma 3]{parametric_nonlinear_programming_Minchenko} may lead to the contradiction of the conclusion.	The reason is that in the proof of \cite[Lemma 3]{parametric_nonlinear_programming_Minchenko} the set $J^{0}$ is chosen in an incorrect way and function $\Phi$ does not depend on $p$ directly.
\end{example}

\begin{proof}[Proof of Proposition \ref{theorem:RCRCQ_to_condition}]
	On the  contrary suppose, that there exist sequences $p_k\rightarrow \bar{p}$, $w_k\rightarrow \bar{x}$ such that $w_k\notin C(p_k)$ and
	\begin{equation}\label{assumption:unbounded}
	\dist (0,\Lambda_{w_k}(p_k,P_{C(p_k)}(w_k))\rightarrow+\infty.
	\end{equation} 
	
	Due to the fact that $\bar{x}\in \liminf\limits_{p\rightarrow \bar{p}} \multifC(p),$ we may assume without loss of generality that $C(p_k)\neq \emptyset$ for each $p_k$, and there exists $\hat{x}_k\in C(p_k)$ such that $\hat{x}_k\rightarrow\bar{x}$.

	RCRCQ   at $(\bar{p},\bar{x})$ implies that RCRCQ holds also at all the points near the point $(\bar{p},\bar{x})$.  Without loss of generality one may assume that RCRCQ holds at all $(p_k,P_{C(p_k)}(w_k))$, $k\in \mathbb{N}$. Consequently, $\Lambda_{w_k}(p_k,P_{C(p_k)}(w_k))\neq \emptyset$ for all $k=1,2,\dots$. 
	
	Passing to subsequences, if necessary, we may assume that $(p_k,w_k)\in V(\bar{p},\bar{w})$, where by RCRCQ, $V(\bar{p},\bar{w})$ is such that for any $J$, $I_1\subset J\subset I_1\cup I_2$
	\begin{equation}\label{conditon:RCRCQ}
	\rank\{ g_i(p_k),\ i\in J\} =\rank\{g_i(\bar{p}),\ i\in J\}.
	\end{equation}
	By Theorem \ref{theorem:representation},
	\begin{equation}\label{eq:representation_lambda}
	w_k-P_{C(p_k)}(w_k)=\sum_{i\in I_{p_k}(P_{C(p_k)}(w_k))} \hat{\lambda}_i^k g_i(p_k),\quad k=1,\dots
	\end{equation}
	where $\hat{\lambda}_i^k\geq 0$, $i\in I_2\cap I_{p_k}(P_{C(p_k)}(w_k))$. 	Recall that $I_{p}(P_{C(p)}(w)):=\{ i\in I_1\cup I_2\ |\ \langle P_{C(p)}(w)\ |\ g_i(p)\rangle - f_i(p)=0  \}$ and $\hat{\lambda}_i^k$, $i\in I_2\cap I_{p_k}(P_{C(p_k)}(w_k))$ are related to the set $\Lambda_{w_k}(p_k,P_{C(p_k)}(w_k))$ via equivalence \eqref{equivalence:relations_lambda}. Then \eqref{eq:representation_lambda} takes the form
	\begin{align}
		\begin{aligned}\label{eq:representation_lambda1}
			& w_k-P_{C(p_k)}(w_k)=\sum_{I_1} \hat{\lambda}_i^k g_i(p_k)+\sum_{I_{p_k}(P_{C(p_k)}(w_k)\setminus I_1} \hat{\lambda}_i^k g_i(p_k),\\
			& \hat{\lambda}_i^k\geq 0, i\in I_{p_k}(P_{C(p_k)}(w_k)\setminus I_1\quad k=1,\dots
		\end{aligned}
	\end{align}
	By Lemma \ref{lemma:linearly_independent_equalities}, there exists $I_1^0\subset I_1$, $I_2^0(w_k,p_k)\subset I_2$,  and $\tilde{\lambda}_i(w_k,p_k)\in \mathbb{R}$, $i\in I_1^0$, $\tilde{\lambda}_i(w_k,p_k)>0$, $i\in I_2^0(w_k,p_k)$ such that	
	\begin{equation}\label{eq:representation_sequence}
	w_k-P_{C(p_k)}(w_k)=\sum_{i \in I_1^0} \tilde{\lambda}_i(w_k,p_k) g_i(p)+\sum_{I_2^0(w_k,p_k)} \tilde{\lambda}_i(w_k,p_k) g_i(p_k),
	\end{equation}
	where $g_i(p_k)$, $i\in I_1^0\cup I_2^0(w_k,p_k)$ are linearly independent.
	
	Passing to a subsequence, if necessary, we may assume that for all $k\in \mathbb{N}$, $ I_2^0(w_k,p_k)$ is a fixed set, i.e., $ I_2^0(w_k,p_k)=I_2^0$.
	
	By RCRCQ, there exists $k_0$ such that for all $k\geq k_0$
	\begin{equation*}
		\rank\{ g_i(p_k) , \ i\in I_1^0\cup I_2^0 \}=\rank\{ g_i(\bar{p}) , \ i\in I_1^0\cup I_2^0 \}.
	\end{equation*}
	Put $\lambda_i^k=\frac{\tilde{\lambda}_i^k}{\|w_k-P_{C(p_k)}(w_k)\|}$. 	
	For every $k\geq k_0$ we have  $\lambda_k(w_k,p_k)\in \Lambda_{w_k}(p_k,P_{C(p_k)}(w_k))$ and by  \eqref{assumption:unbounded}, $\|{\lambda}^k(w_k,p_k)\|\rightarrow+\infty $. Without loss of generality we may assume that ${\lambda}^k(w_k,p_k)
	\|{\lambda}^k(w_k,p_k)\|^{-1}\rightarrow\bar{\lambda}$. Then by \eqref{eq:representation_sequence} we obtain
	\begin{equation*}
		0=\sum_{i\in I_1^0\cup I_2^0} \bar{\lambda}_i g_i(\bar{p}),\ \bar{\lambda}_i\geq 0,\ i\in I_2^0, 
	\end{equation*}
	where $\|\bar{\lambda}\|=1$. This contradicts the fact that $g_i(\bar{p})$, $i\in I_1^0\cup I_2^0$ are linearly independent.
\end{proof}

In the next proposition we relate the boundedness of the Lagrange multiplier set $\Lambda_{w}^M(p,P_{C(p)}(w))$ to the $R$-regularity of $\multifC$ at $(\bar{p},\bar{x})$. For  sets $C(p)$ given as solution sets to parametric systems of nonlinear equations and inequalities in finite dimensional spaces this fact has been already proved
in \cite[Theorem 2]{parametric_nonlinear_programming_Minchenko}.
The proof we give below is based on the proof of  Theorem 2 of \cite{parametric_nonlinear_programming_Minchenko}.

\begin{proposition}\label{theorem:condition_to_R-regular}
	Let $\bar{p}\in \setD$, $\bar{x}\in C(\bar{p})$ and
	$\bar{x}\in \liminf\limits_{p\rightarrow\bar{p}} \multifC(p)$. Assume that there exist numbers $M>0$, $\delta_1>0$, $\delta_2>0$ such that 
	$$\Lambda_{w}^M(p,P_{C(p)}(w)):=\{ \lambda \in \Lambda_{w}(p,P_{C(p)}(v))\ |\ \sum_{i=1}^{n} |\lambda_i|\leq M\}\neq \emptyset 
	$$ for all $p\in (\bar{p}+\delta_1B)\cap S$ and for all $w\in (\bar{x}+\delta_2 B)$, $w\notin C(p)$. Then the multifuction $\multifC$ is $R$-regular at  $(\bar{x},\bar{p})$.
\end{proposition}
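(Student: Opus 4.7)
The plan is to argue by contradiction: if the $R$-regularity conclusion fails, then for every $\alpha>0$ there exist $(p,x)$ arbitrarily close to $(\bar p,\bar x)$ violating the $R$-regularity inequality, so we may extract sequences $p_k\to\bar p$, $w_k\to\bar x$ with $\alpha_k\to+\infty$ and
\begin{equation*}
\dist(w_k,C(p_k))>\alpha_k\cdot\max\{0,\,|G_i(w_k,p_k)|,\,i\in I_1,\ G_i(w_k,p_k),\,i\in I_2\}.
\end{equation*}
In particular $w_k\notin C(p_k)$ for all large $k$, and since $\bar x\in\liminf_{p\to\bar p}\multifC(p)$ the projection $P_{C(p_k)}(w_k)$ is well defined and converges to $\bar x$. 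For $k$ large enough the hypothesis provides a multiplier vector $\lambda^k\in\Lambda^M_{w_k}(p_k,P_{C(p_k)}(w_k))$ with $\sum_i|\lambda^k_i|\le M$.

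The core of the argument is to exploit the equivalence \eqref{equivalence:relations_lambda}, which turns the KKT identity into
\begin{equation*}
w_k-P_{C(p_k)}(w_k)=\sum_{i=1}^{n}\hat\lambda^k_i\,g_i(p_k),\qquad \hat\lambda^k_i:=\lambda^k_i\,\|P_{C(p_k)}(w_k)-w_k\|.
\end{equation*}
Taking the inner product of both sides with $w_k-P_{C(p_k)}(w_k)$ and dividing by $\|w_k-P_{C(p_k)}(w_k)\|$ yields
\begin{equation*}
\dist(w_k,C(p_k))=\sum_{i=1}^{n}\lambda^k_i\,\langle g_i(p_k)\,|\,w_k-P_{C(p_k)}(w_k)\rangle.
\end{equation*}
Then I would use feasibility of $P_{C(p_k)}(w_k)\in C(p_k)$: for $i\in I_1$ the term equals $\lambda^k_i G_i(w_k,p_k)$, and for $i\in I_2$ complementary slackness $\lambda^k_i G_i(P_{C(p_k)}(w_k),p_k)=0$ shows that whenever $\lambda^k_i\ne 0$ the constraint is active at the projection, so again the term equals $\lambda^k_i G_i(w_k,p_k)$. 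Combining this with $\lambda^k_i\ge 0$ for $i\in I_2$ produces the estimate
\begin{equation*}
\dist(w_k,C(p_k))\le\Bigl(\sum_{i=1}^{n}|\lambda^k_i|\Bigr)\max\{0,\,|G_i(w_k,p_k)|,i\in I_1,\,G_i(w_k,p_k),i\in I_2\}\le M\cdot\max\{\cdots\},
\end{equation*}
contradicting the choice $\alpha_k\to+\infty$.

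The main obstacle I foresee is the careful bookkeeping for the inequality constraints: one must argue that each $\lambda^k_i$-weighted inner product $\lambda^k_i\langle g_i(p_k)\,|\,w_k-P_{C(p_k)}(w_k)\rangle$ is bounded above by $\lambda^k_i\max\{0,G_i(w_k,p_k)\}$, using both complementary slackness (which collapses inactive indices) and the sign of $\lambda^k_i$ to discard the negative part of $G_i(w_k,p_k)$. Beyond that, one has to record that the chosen neighbourhood depends only on $\delta_1,\delta_2$, so the argument produces a \emph{single} constant $\alpha=M$ valid for all $(p,x)$ with $p\in\bar p+\delta_1 B$ and $x\in\bar x+\delta_2 B$, which is exactly the $R$-regularity of $\multifC$ at $(\bar p,\bar x)$.
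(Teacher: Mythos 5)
Your argument is correct, and its essential content coincides with the paper's: both proofs reduce the claim to the estimate $\dist(w,C(p))\le\sum_i\lambda_i\,(G_i(p,w)-G_i(p,P_{C(p)}(w)))=\sum_i\lambda_i G_i(p,w)\le \|\lambda\|_1\max\{\cdots\}\le M\max\{\cdots\}$, using feasibility of the projection for $i\in I_1$, complementary slackness and $\lambda_i\ge 0$ for $i\in I_2$, and the uniform $\ell_1$-bound on the multipliers. Where you differ is in how the intermediate estimate is obtained: you take the inner product of the representation $w-P_{C(p)}(w)=\sum_i\hat\lambda_i g_i(p)$ from \eqref{equivalence:relations_lambda} with $w-P_{C(p)}(w)$ and divide by the norm, which yields the relation as an exact identity; the paper instead introduces the auxiliary convex function $h(p,x)=\frac{\langle x-w\,|\,x-P_{C(p)}(w)\rangle}{\|P_{C(p)}(w)-w\|}+\sum_i\lambda_i^*G_i(p,x)$ with the doubled multipliers $\lambda^*=2\lambda\in\Lambda^*_w$ and applies the gradient inequality for convex functions at $x=P_{C(p)}(w)$, arriving at the same bound with constant $2M$ rather than your $M$. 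Your route is more elementary and even gives a slightly better constant. Two cosmetic remarks: the proof by contradiction with $\alpha_k\to+\infty$ is superfluous, since your computation directly produces the single constant $\alpha=M$ valid on $(\bar p+\delta_1 B)\times(\bar x+\delta_2 B)$ (intersected with the smaller ball around $\bar p$ on which $\liminf$ guarantees $C(p)\neq\emptyset$, so that $P_{C(p)}(w)$ exists — a point both you and the paper must and do record); and the convergence $P_{C(p_k)}(w_k)\to\bar x$ that you mention is not actually needed anywhere in the argument.
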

\begin{proof}
	Since $\bar{x}\in \liminf\limits_{p\rightarrow \bar{p}} \multifC(p)$ one can find $\delta_3>0$ such that $C(p)\cap \{\bar{x}+4^{-1} \delta_3 B  \}\neq \emptyset$ for all $p\in \bar{p}+\delta_3 B$. Let $p\in \bar{p}+2^{-1}\delta_3B$, $w\in \bar{x}+4^{-1}\delta_3B$. If $w\in C(p)$ then $\dist(w,C(p))=0$.
	
	Let $w\notin C(p)$ and $w\in \bar{x}+4^{-1}\delta_3 B$. Since $C(p)\cap \{\bar{x}+4^{-1}\delta_3B\}\neq \emptyset$ there exists $x_1\in C(p)\cap \{ \bar{x}+4^{-1}\delta_3 B\}$. Then
	\begin{equation*}
		\| P_{C(p)}(w)-w\|\leq \| w- x_1\|\leq \|w-\bar{x}\|+\|x_1-\bar{x}\|<2^{-1}\delta_3
	\end{equation*}
	It follows that $P_{C(p)}(w)\in \bar{x}+\delta_3B$.
	Let
	\begin{equation*}
		\lambda\in \bigg\{ \lambda\in \mathbb{R}^n \ |\ \sum_{i=1}^{n}|\lambda_i| \leq 2M \bigg\}.
	\end{equation*}
	Introduce a function
	\begin{equation*}
		h(p,x)=h(p,x,w,\lambda,P_{C(p)}(w))=\dfrac{\langle x-w \ |\ x-P_{C(p)}(w)\rangle  }{\|P_{C(p)}(w)-w\| }+\sum_{i=1}^n \lambda_i G_i(p,x)
	\end{equation*}
	The function $h(p,x)$ is convex with respect to $x$ on $\hilbertH$.

	Let $\lambda\in \Lambda_w^M(p,P_{C(p)}(w))$, $p\in \bar{p}+2^{-1}\delta_3B$, $w\in \bar{x}+4^{-1}\delta_3B$ such that $w\notin C(p)$. Since $\Lambda_w^*(x,P_{C(p)}(w))=2\Lambda_w(x,P_{C(p)}w))\neq \emptyset$ by \eqref{set:2lambda} we have $\lambda^*:=2\lambda\in \Lambda_w^*(x,P_{C(p)}(w)) $.
	
	The equality $\nabla_x L_x^*(p,P_{C(p)}(w),\lambda^*)=0$ can be written in the form
	\begin{equation*}
		\frac{w-P_{C(p)}(w)}{\|w-P_{C(p)}(w)\|}=\frac{P_{C(p)}(w)-v}{\|P_{C(p)}(w)-w\|}+\sum_{i=1}^n \lambda_i^* g_i(p),
	\end{equation*}
	where the right side coincides with the gradient $\nabla_x h(p,x)$ of the function 
	\begin{equation*}
		h(p,x)=h(p,x,w,\lambda^*,P_{C(p)}(w))=\dfrac{\langle x-w \ |\ x-P_{C(p)}(w)\rangle  }{\|P_{C(p)}(w)-w\| }+\sum_{i=1}^n \lambda_i^* G_i(p,x)
	\end{equation*}
	at the point $y=P_{C(p)}(w)$.

	Since
	\begin{equation*}
		\langle \nabla_x h(p, P_{C(p)}(w))\ |\ w- P_{C(p)}(w) \rangle \leq h(p,w)-h(p,P_{C(p)}(w))
	\end{equation*}
	due to convexity of the function $h(p,x)$ with
	\begin{equation*}
		\lambda^*\in\Lambda_w^*(x,P_{C(p)}(w))\cap \bigg\{ \lambda\in \mathbb{R}^n \ |\ \sum_{i=1}^n |\lambda_i|\leq 2M  \bigg\},
	\end{equation*}
	from the last inequality it follows that
	\begin{align*}
		\|w&-P_{C(p)}(w)\|=\frac{\langle w- P_{C(p)}(w)\ |\ w- P_{C(p)}(w)\rangle}{\|P_{C(p)}(w)-w\|}\\
		&=\bigg\langle \frac{P_{C(p)}(w)-w}{\|P_{C(p)}(w)-w\|}+\sum_{i=1}^{n}\lambda_i^* g_i(p)\ |\ w- P_{C(p)}(w)\bigg\rangle\\
		&\leq \frac{\langle w-w \ |\ w- P_{C(p)}(w)\rangle }{\|P_{C(p)}(w)-w\|}+\sum_{i=1}^n \lambda_i^* G_i(p,w)\\
		&-\frac{\langle P_{C(p)}(w)- w \ |\ P_{C(p)}(w)-P_{C(p)}(w)\rangle}{\|P_{C(p)}(w)-w\|}-\sum_{i=1}^n \lambda_i^* G_i(p,P_{C(p)}(w))\\
		&=\sum_{i=1}^n \lambda_i^* (G_i(p,w)-G_i(p,P_{C(p)}(w)))=\sum_{i=1}^n \lambda_i^* G_i(p,w)=2\sum_{i=1}^n \lambda_i G_i(p,w).
	\end{align*}
	This inequality implies
	\begin{align*}
		&\dist (w,C(p))=\|w-P_{C(p)}(w)\|\leq 2 \|\lambda\|_1 \max\{0, G_i(p,w),\ i \in I_2,\ |G_i(p,w)|, i\in I_1 \}\\
		&\leq 2M\max\{0, G_i(p,w),\ i \in I_2,\ |G_i(p,w)|, i\in I_1 \}.
	\end{align*}
\end{proof}

Now we show that if the multifunction $\multifC$ is $R$-regular at $(\bar{p},\bar{x})$ then $\multifC$ is Lipschitz like at $(\bar{p},\bar{x})$.
\begin{proposition}\label{theorem:r-regular_to_lipschitz-like}
	Let $\hilbertH,\ \hilbertG$ be a Hilbert spaces and $f_i:\ \setD\rightarrow \mathbb{R}$, $g_i:\ \setD\rightarrow \hilbertH$ are Lipschitz on $\setD\subset \hilbertG$. If the set-valued mapping $\multifC:\ \setD \rightrightarrows \hilbertH$ given by \eqref{M(p,v)} is R-regular at $(\bar{p},\bar{x})$, $\bar{p}\in \setD$, $\bar{x}\in C(\bar{p})$ then $\multifC$ is Lipschitz-like at $(\bar{p},\bar{x})$ 
\end{proposition}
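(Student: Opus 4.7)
The plan is to estimate $\dist(x_1,C(p_2))$ directly through the $R$-regularity inequality applied at $(p_2,x_1)$, and then extract a point $x_2\in C(p_2)$ close to $x_1$ via projection in the Hilbert space $\hilbertH$.

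First I would invoke $R$-regularity at $(\bar p,\bar x)$ to obtain $\alpha>0$ and neighbourhoods $U_0(\bar p)$, $V_0(\bar x)$ such that
\begin{equation*}
\dist(x,C(p))\leq \alpha\max\{0,\,|G_i(x,p)|,\,i\in I_1,\,G_i(x,p),\,i\in I_2\}
\end{equation*}
for all $p\in U_0(\bar p)$, $x\in V_0(\bar x)$. I would then shrink to $V(\bar x):=\bar x+r\mathbb{B}\subset V_0(\bar x)$ for some $r>0$, so that every $x\in V(\bar x)$ satisfies $\|x\|\leq K:=\|\bar x\|+r$, and set $U(\bar p):=U_0(\bar p)$ (possibly further shrunk to guarantee the projection inequality below).

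Next, fix $p_1,p_2\in U(\bar p)$ and $x_1\in C(p_1)\cap V(\bar x)$. Since $x_1\in C(p_1)$, we have $G_i(x_1,p_1)=0$ for $i\in I_1$ and $G_i(x_1,p_1)\leq 0$ for $i\in I_2$. The Lipschitz assumption on $f_i$ and $g_i$ gives, for every $i\in I_1\cup I_2$,
\begin{equation*}
|G_i(x_1,p_2)-G_i(x_1,p_1)|\leq \bigl(\|x_1\|\,\ell_{g_i}+\ell_{f_i}\bigr)\|p_1-p_2\|\leq (K\ell_{g_i}+\ell_{f_i})\|p_1-p_2\|.
\end{equation*}
Setting $L:=\max_{i\in I_1\cup I_2}(K\ell_{g_i}+\ell_{f_i})$, this yields $|G_i(x_1,p_2)|\leq L\|p_1-p_2\|$ for $i\in I_1$ and $G_i(x_1,p_2)\leq L\|p_1-p_2\|$ for $i\in I_2$. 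Applying $R$-regularity at $(p_2,x_1)$ then gives
\begin{equation*}
\dist(x_1,C(p_2))\leq \alpha L\,\|p_1-p_2\|.
\end{equation*}

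In particular the right-hand side is finite, so $C(p_2)\neq\emptyset$. Being the intersection of closed hyperplanes and closed half-spaces in $\hilbertH$, $C(p_2)$ is closed and convex, so the projection $P_{C(p_2)}(x_1)$ exists. Taking $x_2:=P_{C(p_2)}(x_1)$ and setting $\ell:=\alpha L+1$, we obtain $\|x_1-x_2\|=\dist(x_1,C(p_2))\leq \alpha L\|p_1-p_2\|<\ell\|p_1-p_2\|$ when $p_1\neq p_2$, i.e.\ $x_1\in C(p_2)+\ell\|p_1-p_2\|\mathbb{B}$; the case $p_1=p_2$ is trivial since then $x_1\in C(p_1)=C(p_2)$. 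The main obstacle is mostly bookkeeping — choosing neighbourhoods small enough that $(p_2,x_1)$ lies in the domain where $R$-regularity applies, and absorbing the open-ball convention of the Lipschitz-like definition by taking $\ell$ strictly above the constant $\alpha L$ produced by the estimate.
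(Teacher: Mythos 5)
Your proposal is correct and follows essentially the same route as the paper's proof: apply the $R$-regularity estimate at $(p_2,x_1)$, use $x_1\in C(p_1)$ to replace the constraint values at $p_2$ by their differences from those at $p_1$, and bound these by the Lipschitz constants of $f_i,g_i$, then realize the distance by the metric projection onto the closed convex set $C(p_2)$. Your additional bookkeeping (bounding $\|x_1\|$ by a uniform $K$ so that the final constant $\ell$ is independent of $x_1$, and padding $\ell$ to accommodate the open-ball convention) only tightens details the paper leaves implicit.
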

\begin{proof}
	By the $R$-regularity of $\multifC$ there exists a constant $\alpha$ and a neighbourhood  $U(\bar{p})$ and a neighbourhood $V(\bar{x})$ such that
	\begin{equation*}
		\text{dist}(x,C(p))\leq \alpha \max\{ 0, |\langle x \ |\  g_i(p)\rangle -f_i(p)|, i\in I_1,\ \langle x \ |\  g_i(p)\rangle -f_i(p),\ i \in I_2  \}
	\end{equation*}
	for all $(p,x)$ in neighbourhood  $U(\bar{p})\times V(\bar{x})$. Let $(p_1,x_1)$, $x_1\in C(p_1)$ in neighbourhood  $U(\bar{p})\times V(\bar{x})$ and $p_2\in U(\bar{p})$. Since $C(p_2)$ is closed and convex there exists $x_2\in C(p_2)$ such that $\text{dist}(x_1,C(p_2))=\|x_1-x_2\|$. Then by $R$-regularity
	\begin{align*}
		&\text{dist}(x_1,C(p_2))=\|x_1-x_2\|\\
		&\leq \alpha
		\max\left\{0,\max_{i\in I_1} |\langle x_1 \ |\  g_i(p_2)\rangle -f_i(p_2)|, \max_{i\in I_2}\  \langle x_1 \ |\  g_i(p_2)\rangle -f_i(p_2) \right\}\\
		&\leq \alpha
		\max\left\{0,\max_{i\in I_1} |\langle x_1 \ |\  g_i(p_2)\rangle -f_i(p_2)-(|\langle x_1 \ |\  g_i(p_1)\rangle -f_i(p_1))|,\right.\\
		& \left.\qquad\max_{i\in I_2}  \langle x_1 \ |\  g_i(p_2)\rangle -f_i(p_2)-(|\langle x_1 \ |\  g_i(p_1)\rangle -f_i(p_1)) \right\}\\
		&=\alpha
		\max\left\{0,\max_{i\in I_1} |\langle x_1 \ |\  g_i(p_2)-g_i(p_1)\rangle -(f_i(p_2)-f_i(p_1))|,\right.\\
		&\left.\qquad\max_{i\in I_2}  \langle x_1 \ |\  g_i(p_2)-g_i(p_1)\rangle -(f_i(p_2)-f_i(p_1)) \right\}\\
		&\leq \alpha
		\max\left\{\max_{i\in I_1} \|x_1\|\|g_i(p_2)-g_i(p_1)\|+ \|f_i(p_2)-f_i(p_1)\|,\right.\\
		&\left.\qquad \max_{i\in I_2}  \|x_1\| \|g_i(p_2)-g_i(p_1)\| +\|f_i(p_2)-f_i(p_1)\| \right\}\\
		&=\alpha
		\max_{i\in I_1\cup I_2} \|x_1\|\|g_i(p_2)-g_i(p_1)\|+ \|f_i(p_2)-f_i(p_1)\|\\
		&\leq \alpha \max_{i\in I_1\cup I_2} (\|x_1\|\ell_{g_i}+\ell_{f_i})\|p_1-p_2\|,
	\end{align*}
	hence $\multifC$ is Lipschitz-like at $(\bar{x},\bar{p})$.
\end{proof}

The following theorem is our main result.
\begin{theorem}
	Let multifunction $\multifC:\ \setD\rightrightarrows \hilbertH$ given by \eqref{M(p,v)} satisfy RCRCQ at $(\bar{x},\bar{p})\in \gph \multifC$. Assume that $\bar{x}\in \liminf\limits_{p\rightarrow \bar{p}} \multifC(p)$. Then  $\multifC$ is Lipschitz-like at $(\bar{p},\bar{x})$ 
\end{theorem}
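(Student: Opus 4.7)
The plan is straightforward: chain together in sequence the three propositions already proved in this section. Together they supply the implications RCRCQ $\Rightarrow$ boundedness of truncated Lagrange multipliers $\Rightarrow$ $R$-regularity $\Rightarrow$ Lipschitz-likeness, so the theorem reduces to verifying that the output of each step matches the input of the next.

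First I would apply Proposition \ref{theorem:RCRCQ_to_condition} to the two hypotheses of the theorem, namely RCRCQ at $(\bar{p},\bar{x})$ and $\bar{x}\in \liminf_{p\to\bar{p}} \multifC(p)$. This yields constants $M>0$, $\delta_0>0$, $\delta>0$ such that the truncated multiplier set $\Lambda_w^M(p,P_{C(p)}(w))$ is nonempty for every $p\in \bar{p}+\delta_0 B$ and every $w\in \bar{x}+\delta B$ with $w\notin C(p)$.

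Next I would feed this conclusion into Proposition \ref{theorem:condition_to_R-regular} by taking $\delta_1:=\delta_0$, $\delta_2:=\delta$ and the same $M$; the hypothesis $\bar{x}\in \liminf_{p\to\bar{p}} \multifC(p)$ required there is inherited verbatim from the theorem statement. Its conclusion is that $\multifC$ is $R$-regular at $(\bar{p},\bar{x})$. Finally, the Lipschitz continuity of the data $f_i,g_i$ on $\setD$ is part of the standing assumptions on $\multifC$ stated in Section 1, so the remaining hypothesis of Proposition \ref{theorem:r-regular_to_lipschitz-like} is automatic, and that proposition delivers the Lipschitz-like property at $(\bar{p},\bar{x})$.

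There is no substantive obstacle: the three propositions have been phrased so that their conclusions and hypotheses interlock. The only bookkeeping point is to confirm that the neighbourhood parameters $(M,\delta_0,\delta)$ furnished by Proposition \ref{theorem:RCRCQ_to_condition} are admissible as the parameters $(M,\delta_1,\delta_2)$ demanded by Proposition \ref{theorem:condition_to_R-regular}, and this follows by direct textual comparison of the two statements.
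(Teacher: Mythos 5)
Your proposal is correct and is exactly the argument the paper gives: the theorem is proved by chaining Proposition \ref{theorem:RCRCQ_to_condition}, Proposition \ref{theorem:condition_to_R-regular} and Proposition \ref{theorem:r-regular_to_lipschitz-like} in that order. Your additional remarks on matching the parameters $(M,\delta_0,\delta)$ to $(M,\delta_1,\delta_2)$ are a harmless elaboration of the same reasoning.
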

\begin{proof}
	The proof follows immediately from Proposition \ref{theorem:RCRCQ_to_condition}, Proposition \ref{theorem:condition_to_R-regular}, Proposition \ref{theorem:r-regular_to_lipschitz-like}.	
\end{proof}

\section{Conclusions}
In this paper we used RCRCQ to investigate Lipschitz-likeness of set valued mapping $\multifC$ given by \eqref{M(p,v)}. 
In many existing papers (e.g. \cite{Bonnans_Shapiro,implicit_functions_and_solution_mapping,aubin_criterion_for_metric_regularity,on_the_calmness_of_a_class-of_multifunctions}) the continuity properties of set-valued mappings are related to the Mangasarian-Fromovitz constraint qualification MFCQ. In general, there is no direct relationship between RCRCQ and MFCQ (see \cite{Kruger2014}). It depends upon the problem considered which of the two constraint qualifications is more useful.\\[0.8cm] \mbox{\ }

\section{Appendix}
\begin{lemma}\label{lemma:linear_independent}
	Let $J=\{1,\dots,k\}$. Let $g_i:\ \hilbertG\rightarrow \hilbertH$, $i\in J$ be continuous operators and let $\bar{p}$ be such that $g_i(\bar{p})$, $i\in J$ are linearly independent. Then there exists a neighbourhood $U(\bar{p})$ such that for all $p\in U(\bar{p})$, $g_i(p)$, $i\in J$ are linearly independent.
\end{lemma}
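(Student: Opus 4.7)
The plan is to reduce the problem to the non-vanishing of a continuous scalar function on $\hilbertG$. The key observation is the classical characterization of linear independence via Gram matrices: a finite family $\{v_1,\dots,v_k\}$ of vectors in a Hilbert space is linearly independent if and only if its Gram matrix $\bigl[\langle v_i \ |\ v_j\rangle\bigr]_{i,j=1}^k$ is non-singular, equivalently has non-zero determinant. This reduces the lemma to a continuity argument for a determinant, with no Hilbert-space-specific machinery beyond joint continuity of the inner product.

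Concretely, I would introduce the function $\Gamma:\hilbertG\to \mathbb{R}$ by
\[
\Gamma(p)\;:=\;\det\Bigl(\bigl[\langle g_i(p) \ |\ g_j(p)\rangle\bigr]_{i,j=1}^{k}\Bigr).
\]
First I would verify that each entry $p\mapsto \langle g_i(p)\ |\ g_j(p)\rangle$ is continuous: continuity of the $g_i$ together with the bilinear bound $|\langle u\ |\ v\rangle-\langle u'\ |\ v'\rangle|\le \|u-u'\|\,\|v\|+\|u'\|\,\|v-v'\|$ yields this immediately. Since the determinant is a polynomial in the matrix entries, $\Gamma$ is continuous on $\hilbertG$. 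By hypothesis $\Gamma(\bar p)\neq 0$, so by continuity there is a neighbourhood $U(\bar p)$ on which $\Gamma$ does not vanish; by the Gram criterion this means the vectors $g_i(p)$, $i\in J$, are linearly independent for every $p\in U(\bar p)$.

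There is no genuine obstacle here: the whole argument rests on continuity of the inner product composed with the $g_i$ and on the fact that the determinant is a polynomial, hence continuous. If one preferred to avoid invoking the Gram determinant, an equivalent route is to introduce the linear operator $T_p:\mathbb{R}^k\to \hilbertH$, $T_p\alpha:=\sum_{i\in J}\alpha_i g_i(p)$, note that $c:=\inf_{\|\alpha\|=1}\|T_{\bar p}\alpha\|>0$ by compactness of the unit sphere in $\mathbb{R}^k$ and injectivity of $T_{\bar p}$, and then use uniform continuity of $(p,\alpha)\mapsto \|T_p\alpha\|$ on this compact sphere to preserve a positive lower bound (hence injectivity of $T_p$) for $p$ in a neighbourhood of $\bar p$. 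Both routes deliver the conclusion.
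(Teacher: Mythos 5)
Your argument is correct and coincides with the paper's own proof: both reduce the statement to the non-vanishing of the Gram determinant $p\mapsto\det\bigl[\langle g_i(p)\ |\ g_j(p)\rangle\bigr]$, which is continuous as a polynomial in the continuous entries, and conclude by continuity at $\bar p$. The alternative operator-based route you sketch is a valid variant but not needed.
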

\begin{proof}
	The fact that $g_i(\bar{p})$, $i\in J$ are linearly independent is equivalent to the fact that the Gram determinant of $g_i(\bar{p})$, $i\in J$ is nonzero (see for example \cite[Lemma 7.5]{deutsch2001best}), i.e
	\begin{align*}
		&\text{Gram}(g_1(\bar{p}),\dots,g_k(\bar{p})):=\\
		&\left|\begin{array}{cccc}
			\langle g_1(\bar{p}) \ |\ g_1(\bar{p}) \rangle & \langle g_1(\bar{p}) \ |\ g_2(\bar{p}) \rangle & \dots & \langle g_1(\bar{p}) \ |\ g_k(\bar{p})\rangle\\
			\langle g_2(\bar{p}) \ |\ g_1(\bar{p}) \rangle & \langle g_2(\bar{p}) \ |\ g_2(\bar{p}) \rangle & \dots & \langle g_2(\bar{p}) \ |\ g_k(\bar{p})\rangle\\
			\dots\\
			\langle g_k(\bar{p}) \ |\ g_1(\bar{p}) \rangle & \langle g_k(\bar{p}) \ |\ g_2(\bar{p}) \rangle & \dots & \langle g_k(\bar{p}) \ |\ g_k(\bar{p})\rangle
		\end{array}\right|\neq 0.
	\end{align*}
	For any $p$ let
	\begin{align*}
		&{\mathcal F}(p):=\text{Gram}(g_1(p),\dots,g_k(p)):=\\
		&\left|\begin{array}{cccc}
			\langle g_1(p) \ |\ g_1(p) \rangle & \langle g_1(p) \ |\ g_2(p) \rangle & \dots & \langle g_1(p) \ |\ g_k(p)\rangle\\
			\langle g_2(p) \ |\ g_1(p) \rangle & \langle g_2(p) \ |\ g_2(p) \rangle & \dots & \langle g_2(p) \ |\ g_k(p)\rangle\\
			\dots\\
			\langle g_k(p) \ |\ g_1(p) \rangle & \langle g_k(p) \ |\ g_2(p) \rangle & \dots & \langle g_k(p) \ |\ g_k(p)\rangle
		\end{array}\right|
	\end{align*}
	Since inner product is a continuous function of arguments and ${\mathcal F}:\ \hilbertG\rightarrow \mathbb{R}$ is a  combination of continuous functions, there exists a neighbourhood $U(\bar{p})$ such that ${\mathcal F}(p)\neq 0$ for all $p\in U(\bar{p})$. Hence, for all $p\in U(\bar{p})$ vectors $g_i(p)$, $i\in J$ are linearly independent.
\end{proof}
\begin{proposition}\label{propostion:I_1-linearly_independent}
	Let $\bar{p}\in \setD$. Assume that RCRCQ holds at $\bar{p}$ for multifunction $\multifC$ given by \eqref{M(p,v)} and $C(p)\neq \emptyset$ for $p\in U_0(\bar{p})$. Then there exists a neighbourhood $U(\bar{p})$ such that for all $p\in U(\bar{p})$
	\begin{align*}
		&\{ x\ |\ \langle x \ |\ g_i(p)\rangle = f_i(p),\ i\in I_1,\ \langle x \ |\ g_i(p)\rangle \leq  f_i(p),\ i\in I_2 \}\\
		&=\{ x\ |\ \langle x \ |\ g_i(p)\rangle = f_i(p),\ i\in I_1^\prime,\ \langle x \ |\ g_i(p)\rangle \leq  f_i(p),\ i\in I_2 \},
	\end{align*}
	where $I_1^\prime\subset I_1$ and $g_i(p)$, $i\in I_1^\prime$ are linearly independent. 
\end{proposition}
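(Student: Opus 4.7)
The plan is to reduce $I_1$ to a maximal linearly independent subfamily and show that on a suitable neighborhood of $\bar p$ the discarded equalities are redundant, using $C(p)\neq\emptyset$. Choose $I_1'\subset I_1$ so that $\{g_i(\bar p):i\in I_1'\}$ is a basis of $\mathrm{span}\{g_i(\bar p):i\in I_1\}$, and put $r:=|I_1'|$. The inclusion $\supset$ is immediate since fewer equalities impose weaker constraints, so I will focus on $\subset$.

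First I would locate a neighborhood on which the chosen subfamily still spans the whole collection of gradients. By Lemma \ref{lemma:linear_independent} applied to $I_1'$ there is a neighborhood $U_1(\bar p)$ on which $\{g_i(p):i\in I_1'\}$ remains linearly independent, so $\rank\{g_i(p):i\in I_1'\}=r$ there. Pick any $\bar x\in C(\bar p)$; since equality indices are always active, $I_1\subset I_{\bar p}(\bar x)$, so RCRCQ applied with $J=I_1$ yields a neighborhood $U_2(\bar p)$ on which $\rank\{g_i(p):i\in I_1\}=\rank\{g_i(\bar p):i\in I_1\}=r$. On $U(\bar p):=U_0(\bar p)\cap U_1(\bar p)\cap U_2(\bar p)$ the linearly independent family $\{g_i(p):i\in I_1'\}$ is therefore a basis of $\mathrm{span}\{g_i(p):i\in I_1\}$, which produces scalars $\alpha_{ij}(p)$ with
\[
g_i(p)=\sum_{j\in I_1'}\alpha_{ij}(p)\,g_j(p)\qquad\text{for every }i\in I_1\setminus I_1'.
\]

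The key step is translating this linear dependence among the $g_i(p)$'s into the corresponding relation among the $f_i(p)$'s; this is where the hypothesis $C(p)\neq\emptyset$ is essential. Pick any $\hat x\in C(p)$, which exists for $p\in U_0(\bar p)$. For $i\in I_1\setminus I_1'$ one computes
\[
f_i(p)=\langle \hat x \,|\, g_i(p)\rangle=\sum_{j\in I_1'}\alpha_{ij}(p)\langle \hat x \,|\, g_j(p)\rangle=\sum_{j\in I_1'}\alpha_{ij}(p)f_j(p).
\]
Now take any $x$ satisfying the reduced system on the right-hand side. Then
\[
\langle x \,|\, g_i(p)\rangle=\sum_{j\in I_1'}\alpha_{ij}(p)\langle x \,|\, g_j(p)\rangle=\sum_{j\in I_1'}\alpha_{ij}(p)f_j(p)=f_i(p),
\]
so $x$ also satisfies the discarded equalities, and the inclusion $\subset$ follows. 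The inequalities indexed by $I_2$ are not touched by the reduction, so the two sets coincide on $U(\bar p)$. The only subtle point is the interplay between RCRCQ, which supplies the rank stability needed to express $g_i(p)$ in the basis $\{g_j(p):j\in I_1'\}$ uniformly for nearby $p$, and the nonemptiness of $C(p)$, which transports that dependence to the right-hand sides $f_i(p)$.
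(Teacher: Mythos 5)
Your argument is correct and follows essentially the same route as the paper's proof: choose a maximal linearly independent subfamily $I_1'$ at $\bar p$, use the Gram-determinant lemma to keep it independent nearby, use RCRCQ with $J=I_1$ to keep the full rank equal to $|I_1'|$, and then transfer the linear dependence among the $g_i(p)$ to the right-hand sides $f_i(p)$ via a point of $C(p)$. Your write-up is in fact slightly more explicit than the paper's about where $C(p)\neq\emptyset$ enters (producing $\hat x$ to deduce $f_i(p)=\sum_{j\in I_1'}\alpha_{ij}(p)f_j(p)$), but the underlying argument is identical.
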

\begin{proof}
	It is enough to consider the case $g_i(\bar{p})$, $i\in I_1$ are linearly dependent. By RCRCQ there exists a neighbourhood $U_1(\bar{p})$ such that for all $p\in U(\bar{p})$
	\begin{equation*}
		\rank\{  g_i(\bar{p}),\ i\in I_1 \}= \rank\{  g_i(p),\ i\in I_1 \}=\alpha.
	\end{equation*}
	Let $I_1^\prime$ be such that $|I_1^\prime|=\alpha$ and $g_i(\bar{p})$, $i\in I_1^\prime$ are linearly independent. By Lemma \ref{lemma:linear_independent}, there exists a neighbourhood $U_2(\bar{p})$ such that for all $p\in U_2(\bar{p})$, $g_i(p)$, $i\in I_1^\prime$ are linearly independent. Let $p\in U_0(\bar{p})\cap U_1(\bar{p})\cap U_2(\bar{p})$  $x$ be such that
	\begin{equation}
	\langle x \ |\ g_i(p)\rangle = f_i(p),\ i\in I_1,\ \langle x \ |\ g_i(p)\rangle \leq  f_i(p),\ i\in I_2.
	\end{equation} 
	Since $\rank\{  g_i(p),\ i\in I_1 \}=|I_1^\prime|$, $C(p)\neq \emptyset$ and $g_i(p)$, $i\in I_1^\prime$ are linearly independent we have
	\begin{align*}
		&\langle x \ |\ g_i(p)\rangle = f_i(p),\ i\in I_1\\
		& \Longleftrightarrow \langle x \ |\ g_i(p)\rangle = f_i(p),\ i\in I_1^\prime\ \wedge\ \langle x \ |\ g_i(p)\rangle = f_i(p),\ i\in I_1\setminus I_1^\prime\\
		&\Longleftrightarrow  \langle x \ |\ g_i(p)\rangle = f_i(p),\ i\in I_1^\prime\ \wedge\ \langle x \ |\ \sum_{j\in I_1^\prime} \alpha_j^i  g_j(p)\rangle =  f_i(p),\ i\in I_1\setminus I_1^\prime\\
		&\Longleftrightarrow  \langle x \ |\ g_i(p)\rangle = f_i(p),\ i\in I_1^\prime\ \wedge\ \sum_{j\in I_1^\prime}\alpha_j^i\langle x \ |\    g_j(p)\rangle = \sum_{j\in I_1^\prime} \alpha_j^i f_j(p),\ i\in I_1\setminus I_1^\prime\\
		&\Longleftrightarrow  \langle x \ |\ g_i(p)\rangle = f_i(p),\ i\in I_1^\prime,
	\end{align*}
	where $g_i(p)=\sum_{j\in I_1^\prime} \alpha_j^i  g_j(p)$, $f_i(p)=\sum_{j\in I_1^\prime} \alpha_j^i  f_j(p)$, $i\in I_1\setminus I_1^\prime$ and $\alpha_j^i\in \mathbb{R}$, $j\in I_1^\prime$, $i\in I_1\setminus I_1^\prime$, not all $\alpha_j^i$, $j\in I_1^\prime$ equal to zero for any $i\in I_1\setminus I_1^\prime$.
\end{proof}
\begin{lemma}\label{lemma:positive_combination2}
	Let $x=\sum_{i\in J_1} \lambda_i a_i + \sum_{i\in J_2} \lambda_i a_i$, $J_1\cap J_2=\emptyset$, $J_1,J_2$ finite sets, $\lambda_i\in \mathbb{R}$, $i\in J_1$, $\lambda_i\geq 0$, $i\in J_2$ and $a_i$, $i\in J_1\cup J_2$ are non-zero vectors. Assume that $a_i$, $i\in J_1$ are linearly independent. Then there exists $J_2^\prime\subset J_2$ and $\lambda_i^\prime$, $i\in J_1\cup J_2^\prime$, $\lambda_i^\prime\in \mathbb{R}$, $i\in J_1$, $\lambda_i^\prime>0$, $i\in J_2^\prime$ such that
	\begin{equation*}
		\sum_{i\in J_1} \lambda_i a_i + \sum_{i\in J_2} \lambda_i a_i=\sum_{i\in J_1} \lambda_i^\prime a_i + \sum_{i\in J_2^\prime} \lambda_i^\prime a_i
	\end{equation*} 
	and $a_i$, $i\in J_1\cup J_2^\prime$ are linearly independent.
\end{lemma}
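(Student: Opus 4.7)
The plan is to prove the lemma by a Carathéodory-type reduction: starting from the given representation, I will iteratively kill one positive coefficient indexed by $J_2$ at a time, while keeping the value $x$ unchanged and keeping all the $J_2$-coefficients nonnegative, until the remaining vectors are linearly independent.

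\medskip
\noindent\textbf{Setup.} First I would discard the indices $i\in J_2$ with $\lambda_i=0$; let $J_2^0:=\{i\in J_2\mid \lambda_i>0\}$, so that
\begin{equation*}
x=\sum_{i\in J_1}\lambda_i a_i+\sum_{i\in J_2^0}\lambda_i a_i
\end{equation*}
and only the coefficients in $J_2^0$ are required to be strictly positive. If the family $\{a_i\}_{i\in J_1\cup J_2^0}$ is already linearly independent, take $J_2^\prime:=J_2^0$ and $\lambda_i^\prime:=\lambda_i$; the conclusion follows. Otherwise I proceed by induction on $|J_2^0|$.

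\medskip
\noindent\textbf{Reduction step.} Assume linear dependence, so there exist scalars $(\mu_i)_{i\in J_1\cup J_2^0}$, not all zero, with $\sum_{i\in J_1}\mu_i a_i+\sum_{i\in J_2^0}\mu_i a_i=0$. Since $\{a_i\}_{i\in J_1}$ is linearly independent, at least one $j\in J_2^0$ must satisfy $\mu_j\neq 0$; replacing $(\mu_i)$ by $(-\mu_i)$ if necessary, I may assume there is some $j\in J_2^0$ with $\mu_j>0$. Define
\begin{equation*}
t:=\min\left\{\frac{\lambda_i}{\mu_i}\ \Big|\ i\in J_2^0,\ \mu_i>0\right\}>0,
\end{equation*}
and let $j^\ast\in J_2^0$ attain the minimum. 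Setting $\lambda_i^{\mathrm{new}}:=\lambda_i-t\mu_i$ for $i\in J_1\cup J_2^0$ (and $\lambda_i^{\mathrm{new}}:=\lambda_i=0$ for $i\in J_2\setminus J_2^0$), the choice of $t$ gives $\lambda_{j^\ast}^{\mathrm{new}}=0$, and for every $i\in J_2^0$ either $\mu_i\leq 0$ (so $\lambda_i^{\mathrm{new}}\geq\lambda_i\geq 0$) or $\mu_i>0$ (so $\lambda_i^{\mathrm{new}}=\lambda_i-t\mu_i\geq 0$ by minimality). Moreover
\begin{equation*}
\sum_{i\in J_1}\lambda_i^{\mathrm{new}}a_i+\sum_{i\in J_2^0}\lambda_i^{\mathrm{new}}a_i=x-t\cdot 0=x,
\end{equation*}
while the new positive support $\{i\in J_2^0\mid \lambda_i^{\mathrm{new}}>0\}$ is strictly contained in $J_2^0$ since it excludes $j^\ast$.

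\medskip
\noindent\textbf{Termination.} After at most $|J_2^0|$ applications of the reduction step, I obtain a subset $J_2^\prime\subset J_2$ and scalars $\lambda_i^\prime$, $i\in J_1\cup J_2^\prime$, with $\lambda_i^\prime>0$ for $i\in J_2^\prime$, $\lambda_i^\prime\in\mathbb{R}$ for $i\in J_1$, $x=\sum_{i\in J_1}\lambda_i^\prime a_i+\sum_{i\in J_2^\prime}\lambda_i^\prime a_i$, and such that the family $\{a_i\}_{i\in J_1\cup J_2^\prime}$ is linearly independent (otherwise the reduction step applies again, contradicting termination). This is exactly the desired conclusion. The only delicate point is the sign bookkeeping in the reduction step — ensuring that the normalization $\mu_j>0$ for some $j\in J_2^0$ is available (which uses the linear independence of $\{a_i\}_{i\in J_1}$) and that the minimum defining $t$ is taken only over indices with $\mu_i>0$, so that positivity is preserved throughout.
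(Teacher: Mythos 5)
Your proof is correct and follows essentially the same Carath\'eodory-type reduction as the paper: both arguments subtract a suitable multiple of a linear dependence relation to annihilate one positive $J_2$-coefficient while preserving the value of $x$ and the nonnegativity of the remaining $J_2$-coefficients, and then iterate. The only difference is cosmetic: you select the index to eliminate via the explicit minimum-ratio $t=\min\{\lambda_i/\mu_i \mid \mu_i>0\}$, whereas the paper establishes the existence of an admissible index $k\in\hat{J}_2$ by a contradiction argument built on a cycle of inequalities; your version is the cleaner and more standard execution of the same step.
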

\begin{proof}
	Without loss of generality we may assume that $\lambda_i>0$, $i\in J_2$. If $a_i$, $i\in J_1\cup J_2$ are linearly independent, then the assertion is obvious. Suppose that $a_i$, $i\in J_1\cup J_2$ are linearly dependent. Then there exists $\hat{J_1}\subset J_1$ and $\hat{J}_2\subset J_2$, $\hat{J}_2\neq \emptyset$ such that
	\begin{equation}\label{eq:vector_dependent}
	\sum_{i\in \hat{J}_1} \beta_i a_i +\sum_{i\in \hat{J}_2} \beta_i a_i=0,\quad \rank\{a_i,\ i\in \hat{J}_1\cup \hat{J}_2  \}= |\hat{J}_1\cup \hat{J}_2|-1
	\end{equation}
	for some $\beta_i\neq 0$, $i\in \hat{J}_1\cup \hat{J}_2$. Then by multiplying  both sides of equality equality \eqref{eq:vector_dependent} by $\frac{\lambda_k}{\beta_k}$, $k\in \hat{J}_2$ we get
	\begin{equation*}
		\sum_{i\in \hat{J}_1} \frac{\lambda_k}{\beta_k}\beta_i a_i +\sum_{i\in \hat{J}_2}\frac{\lambda_k}{\beta_k} \beta_i a_i=0.
	\end{equation*}
	Therefore for any $k\in \hat{J}_2$ we have
	\begin{align*}
		x=&\sum_{i\in J_1} \lambda_i a_i + \sum_{i\in J_2} \lambda_i a_i-\sum_{i\in \hat{J}_1} \frac{\lambda_k}{\beta_k}\beta_i a_i -\sum_{i\in \hat{J}_2}\frac{\lambda_k}{\beta_k} \beta_i a_i\\
		&=\sum_{i\in J_1\setminus J_1^\prime} \lambda_i a_i + \sum_{i\in J_2\setminus J_2^\prime} \lambda_i a_i  + \sum_{i\in \hat{J}_1} (\lambda_i - \frac{\lambda_k}{\beta_k}\beta_i)a_i+\sum_{i\in \hat{J}_2\setminus\{k\}} (\lambda_i - \frac{\lambda_k}{\beta_k}\beta_i)a_i.
	\end{align*}
	We will show that there exists $k\in \hat{J}_2$ such that for any $i\in \hat{J}_2\setminus\{k\}$ we have
	\begin{equation*}
		\lambda_i - \frac{\lambda_k}{\beta_k}\beta_i\geq 0.
	\end{equation*}
	Suppose by contrary that for all $k\in \hat{J}_2$ there exists $i_k\in \hat{J}_2\setminus\{k\}$ such that
	\begin{equation*}
		\lambda_{i_k} < \frac{\beta_{i_k}}{\beta_k}\lambda_k.
	\end{equation*}
	Let us note that fact $\lambda_i>0$ for all $i\in \hat{J}_2$ implies that for all $k\in \hat{J}_2$ we have
	\begin{equation*}
		\frac{\beta_{i_k}}{\beta_k}> \frac{\lambda_{i_k}}{\lambda_k} >0.
	\end{equation*}
	Then there exist real numbers $\lambda_{i_1},\dots,\lambda_{i_q}$, where $i_1,\dots,i_q\subset \hat{J}_2$, $q\leq |\hat{J}_2|$ and
	\begin{equation*}
		\lambda_{i_1} < \frac{\beta_{i_1}}{\beta_{i_2}}\lambda_{i_2},\ \dots,\ \lambda_{i_{q-1}} < \frac{\beta_{i_{q-1}}}{\beta_{i_q}}\lambda_{i_q},\ \lambda_{i_q}< \frac{\beta_{i_{q}}}{\beta_{i_1}}\lambda_{i_1}.
	\end{equation*}
	However, this implies that
	\begin{equation*}
		\lambda_{i_1}<\frac{\beta_{i_1}}{\beta_{i_2}}\lambda_{i_2}<\frac{\beta_{i_1}}{\beta_{i_2}} \frac{\beta_{i_2}}{\beta_{i_3}} \lambda_{i_3}=\frac{\beta_{i_1}}{\beta_{i_3}}\lambda_{i_3}<\dots<\frac{\beta_{i_1}}{\beta_{i_q}}\lambda_{i_q}<\frac{\beta_{i_1}}{\beta_{i_q}}\frac{\beta_{i_q}}{\beta_{i_1}}\lambda_{i_1}=\lambda_{i_1},
	\end{equation*}
	which leads to a contradiction. 
	Hence, we can represent $x$ as 
	\begin{equation*}
		x=\sum_{i\in J_1} \lambda_i^\prime a_i + \sum_{i\in J_2^\prime} \lambda_i^\prime a_i
	\end{equation*} 
	where $J_2^\prime\subset J_2$, $\lambda_i^\prime>0$, $i\in J_2^\prime$ and $a_i$, $i\in J_1\cup J_2^\prime$ are linearly independent.
\end{proof}
The following theorem is particular case of \cite[Theorem 11.4]{Lectures_on_Mathematical_Theory} applied to the problem \eqref{eq:projection2}.
\begin{theorem}\label{theorem:representation} 
	Let ${p}\in \setD$ and $w\notin C({p})$. Then there exist numbers $\lambda_i$, $i=I_1\cup I_2$, not all zero, $\lambda_i\geq 0$, $i\in I_2$, $\lambda_i G_i(P_{C({p})}({w}),{p})=0$, $i\in I_2$ such that
	\begin{equation*}
		{w}-P_{C({p})}({w})=\sum_{I_1\cup I_2} \lambda_i g_i({p}).
	\end{equation*}
\end{theorem}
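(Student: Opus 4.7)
The plan is to interpret $P_{C(p)}(w)$ as the unique minimizer of the auxiliary convex program \eqref{eq:projection2}, namely $\min_{x} \tfrac12\|w-x\|^2$ subject to the affine constraints $\langle x\mid g_i(p)\rangle = f_i(p)$, $i\in I_1$, and $\langle x\mid g_i(p)\rangle \leq f_i(p)$, $i\in I_2$, and then to read off the desired representation as the Lagrange stationarity condition of this program. Since the cited reference (\cite[Theorem 11.4]{Lectures_on_Mathematical_Theory}) supplies a Lagrange-multiplier rule in the required Hilbert-space generality, the task reduces to verifying its hypotheses and computing the gradient.

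The first step is to record that the objective $x\mapsto \tfrac12\|w-x\|^2$ is strictly convex and Fr\'echet differentiable on $\hilbertH$ with gradient $\nabla_x \tfrac12\|w-x\|^2 = x-w$, and that each constraint function $x\mapsto G_i(x,p) = \langle x\mid g_i(p)\rangle - f_i(p)$ is continuous and affine in $x$ for fixed $p$, so its $x$-gradient is $g_i(p)$. In particular, all constraints are affine, so no additional constraint qualification is needed to apply the Lagrange multiplier theorem at the (existing and unique) minimizer $P_{C(p)}(w)$.

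The second step is to apply \cite[Theorem 11.4]{Lectures_on_Mathematical_Theory} at $x = P_{C(p)}(w)$. It yields multipliers $\lambda_i$, $i\in I_1\cup I_2$, not all zero, with $\lambda_i\geq 0$ and $\lambda_i G_i(P_{C(p)}(w),p)=0$ for $i\in I_2$, satisfying the stationarity identity
\begin{equation*}
\bigl(P_{C(p)}(w) - w\bigr) + \sum_{i\in I_1\cup I_2} \lambda_i\, g_i(p) \;=\; 0,
\end{equation*}
which rearranges to $w - P_{C(p)}(w) = \sum_{i\in I_1\cup I_2} \lambda_i\, g_i(p)$, exactly as claimed.

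The final step is non-triviality: because $w\notin C(p)$, the residual $w - P_{C(p)}(w)$ is a nonzero element of $\hilbertH$, so the right-hand side cannot vanish identically, confirming that the tuple $(\lambda_i)$ is not the zero vector (this also makes the normalization used later in \eqref{equivalence:relations_lambda} meaningful). There is no real obstacle here beyond correctly matching the formulation of \cite[Theorem 11.4]{Lectures_on_Mathematical_Theory} to problem \eqref{eq:projection2}; the only point requiring care is that the affineness of the $G_i(\cdot,p)$ is what removes the need for a constraint qualification, so that the multiplier rule applies unconditionally on all of $C(p)$.
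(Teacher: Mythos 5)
Your proposal is correct and takes essentially the same route as the paper, which gives no separate argument but simply states the result as a particular case of \cite[Theorem 11.4]{Lectures_on_Mathematical_Theory} applied to the projection problem \eqref{eq:projection2}; you verify the hypotheses (affine constraints, differentiable strictly convex objective) and read off stationarity exactly as intended. Your added observation that the multipliers cannot all vanish because $w\notin C(p)$ forces $w-P_{C(p)}(w)\neq 0$ is a correct and welcome explicit justification of the ``not all zero'' clause.
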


\begin{lemma}\label{lemma:linearly_independent_equalities}
	Let multifunction $\multifC$ given by \eqref{M(p,v)} satisfy RCRCQ at ${\bar{p}}\in \setD$ and $\bar{x}\in C(\bar{p})$. Assume that $C(p)\neq \emptyset$ for $p\in U_0(\bar{p})$. Then there exists a neighbourhood $U(\bar{p})$ such that for all $p\in U(\bar{p})$,  $w\notin C(\bar{p})$  we have
	\begin{equation*}
		w-P_{C(p)}(w)=\sum_{i \in I_1^0} \tilde{\lambda}_i(w,p) g_i(p)+\sum_{I_2^0(w,p)} \tilde{\lambda}_i(w,p) g_i(p),
	\end{equation*}
	where $\tilde{\lambda}_i(w,p)\in \mathbb{R}$, $i\in I_1^0$, $\tilde{\lambda}_i(w,p)>0$, $i\in I_2^0(w,p)$, $I_1^0\subset I_1$, $I_2^0(w,p)\subset I_2$, $g_i(p)$, $i\in I_1^0\cup I_2^0(w,p)$, $p\in U(\bar{p})$ are linearly independent.

\end{lemma}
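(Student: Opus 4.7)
The plan is to combine the three preparatory results in the appendix, selecting a single, parameter-independent subset $I_1^0\subset I_1$ of equality indices that works uniformly for all $(w,p)$ close to $(\bar{x},\bar{p})$.

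First, I would fix the neighbourhood. Let $U(\bar{p})$ be the intersection of (i) $U_0(\bar{p})$ from the hypothesis, (ii) the neighbourhood supplied by Proposition \ref{propostion:I_1-linearly_independent}, which produces a subset $I_1^0\subset I_1$ such that the equality system indexed by $I_1$ is equivalent to the one indexed by $I_1^0$ and $g_i(\bar{p})$, $i\in I_1^0$, are linearly independent, and (iii) the neighbourhood from Lemma \ref{lemma:linear_independent} applied to the family $\{g_i,\,i\in I_1^0\}$, which guarantees that $g_i(p)$, $i\in I_1^0$, remain linearly independent for every $p\in U(\bar{p})$. In particular $I_1^0$ is fixed, depending only on $\bar{p}$.

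Next, for $p\in U(\bar{p})$ and $w\notin C(p)$, I would apply Theorem \ref{theorem:representation} to obtain
\begin{equation*}
w-P_{C(p)}(w)=\sum_{i\in I_1}\lambda_i g_i(p)+\sum_{i\in I_2}\lambda_i g_i(p),\qquad \lambda_i\geq 0\ \text{for}\ i\in I_2.
\end{equation*}
For each $i\in I_1\setminus I_1^0$, RCRCQ together with Proposition \ref{propostion:I_1-linearly_independent} yields scalars $\alpha_j^i(p)\in\mathbb{R}$ with $g_i(p)=\sum_{j\in I_1^0}\alpha_j^i(p)g_j(p)$. Substituting these expressions and collecting terms, the sum over $I_1$ becomes a sum over $I_1^0$ with (possibly signed) coefficients $\mu_i\in\mathbb{R}$, so that
\begin{equation*}
w-P_{C(p)}(w)=\sum_{i\in I_1^0}\mu_i g_i(p)+\sum_{i\in I_2,\,\lambda_i>0}\lambda_i g_i(p),
\end{equation*}
where we discard the vanishing contributions (terms with $\lambda_i=0$ or, if any, with $g_i(p)=0$).

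Finally, I would invoke Lemma \ref{lemma:positive_combination2} with $J_1:=I_1^0$, $J_2:=\{i\in I_2:\lambda_i>0\ \text{and}\ g_i(p)\neq 0\}$ and $a_i:=g_i(p)$. Its hypotheses are met because $g_i(p)$, $i\in I_1^0$, are linearly independent on $U(\bar{p})$ by the choice of neighbourhood, and all remaining $a_i$ are non-zero. The conclusion produces a subset $I_2^0(w,p)\subset J_2\subset I_2$ and new coefficients $\tilde{\lambda}_i(w,p)\in\mathbb{R}$ for $i\in I_1^0$ and $\tilde{\lambda}_i(w,p)>0$ for $i\in I_2^0(w,p)$, giving the required representation with the family $\{g_i(p):i\in I_1^0\cup I_2^0(w,p)\}$ linearly independent.

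The only genuinely delicate point is ensuring that $I_1^0$ is genuinely independent of $(w,p)$; this is the content of Proposition \ref{propostion:I_1-linearly_independent} combined with the Gram-determinant continuity argument of Lemma \ref{lemma:linear_independent}. The remainder is pure substitution and a direct appeal to the algebraic rearrangement of Lemma \ref{lemma:positive_combination2}.
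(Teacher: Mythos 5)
Your proposal is correct and follows essentially the same route as the paper: fix $I_1^0$ via Proposition \ref{propostion:I_1-linearly_independent}, obtain a multiplier representation from Theorem \ref{theorem:representation}, and extract a linearly independent positive subfamily with Lemma \ref{lemma:positive_combination2}. The only cosmetic difference is that the paper applies Theorem \ref{theorem:representation} directly to the reduced description of $C(p)$ (with equalities indexed by $I_1^0$), whereas you apply it to the full system and then substitute $g_i(p)=\sum_{j\in I_1^0}\alpha_j^i(p)g_j(p)$ for $i\in I_1\setminus I_1^0$; both are valid and rest on the same rank-constancy argument.
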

\begin{proof}
	By Proposition \ref{propostion:I_1-linearly_independent}, there exists a neighbourhood $U(\bar{p})$ such that for all $p\in U(\bar{p})$ we have
	\begin{align*}
		&\{ x\ |\ \langle x \ |\ g_i(p)\rangle = f_i(p),\ i\in I_1,\ \langle x \ |\ g_i(p)\rangle \leq  f_i(p),\ i\in I_2 \}\\
		&=\{ x\ |\ \langle x \ |\ g_i(p)\rangle = f_i(p),\ i\in I_1^0,\ \langle x \ |\ g_i(p)\rangle \leq  f_i(p),\ i\in I_2 \},
	\end{align*}
	where $I_1^0\subset I_1$ and $g_i(p)$, $i\in I_1^0$ are linearly independent. Take $p\in U(\bar{p})$, $w\notin C(p)$. By Theorem \ref{theorem:representation},
	\begin{equation*}
		w-P_{C(p)}(w)=\sum_{i \in I_1^0} \lambda_i g_i(p)+\sum_{I_2} \lambda_i g_i(p),
	\end{equation*}
	where $\lambda_i\in \mathbb{R}$, $i\in I_1^0$ and $\lambda_i\geq 0$, $i\in I_2$. By Lemma \ref{lemma:positive_combination2}, there exists $I_2^0(w,p)\subset I_2$ and $\tilde{\lambda}_i(w,p)\in \mathbb{R}$, $i\in I_1^0$ and $\tilde{\lambda}_i(w,p)>0$, $i\in I_2^0(w,p)$ such that
	\begin{equation*}
		w-P_{C(p)}(w)=\sum_{i \in I_1^0} \tilde{\lambda}_i(w,p) g_i(p)+\sum_{I_2^0(w,p)} \tilde{\lambda}_i(w,p) g_i(p).
	\end{equation*}
	and $g_i(p)$, $i\in I_1^0\cup I_2^0(w,p)$ are linearly independent.
\end{proof}

\bibliographystyle{plain}
\bibliography{mybibfile}

\end{document}